\newtheorem{conjecture}{Conjecture}
\newtheorem{theorem}{Theorem} 
\newtheorem{lemma}{Lemma}[section]
\theoremstyle{remark}
\newtheorem*{remark}{Remark}
\newtheorem{proposition}{Proposition}
\newtheorem{rem}{Remark}[theorem]
\theoremstyle{definition}
\theoremstyle{plain}
\newtheorem{definition}{Definition}[section]
\renewcommand{\leq}{\leqslant}
\renewcommand{\geq}{\geqslant}
\renewcommand{\le}{\leqslant}
\renewcommand{\ge}{\geqslant}
\DeclareMathOperator{\End}{End}
\def\T{\mathcal{T}}
\begin{document}
\newcommand{\eps}{\varepsilon}
\renewcommand{\phi}{\varphi}

\newcommand{\suchthat}{\, : \,}

\author{Dmitry Krachun, Fedor Petrov}
\title{On the size of $A+\lambda A$ for algebraic $\lambda$}
\maketitle
\begin{abstract}
For a finite set  $A\subset \mathbb{R}$ and real $\lambda$, let $A+\lambda A:=\{a+\lambda b :\,  a,b\in A\}$. Combining a structural theorem of Freiman on sets with small doubling constants together with a discrete analogue of Pr\'ekopa–Leindler inequality we prove a lower bound $|A+\sqrt{2}  A|\geq (1+\sqrt{2})^2|A|-O({|A|}^{1-\eps})$ which is essentially tight. We also formulate a conjecture about the value of $\liminf |A+\lambda A|/|A|$ for an arbitrary algebraic $\lambda$. Finally, we prove a tight lower bound on the Lebesgue measure of $K+\mathcal{T} K$ for a given linear operator $\mathcal{T}\in \operatorname{End}(\mathbb{R}^d)$
and a compact set $K\subset \mathbb{R}^d$ with fixed measure. This continuous result supports the conjecture and yields an upper bound in it. 
\end{abstract}
\section{Introduction}
Let $A$ be a finite non-empty set with elements in a commutative ring $R$ and let $\lambda$ be
an element of $R$. A number of papers
are devoted to bounding 
$|A+\lambda A|=|\{x+\lambda y:x,y\in A\}|$
in terms of $|A|$ and various generalizations
of this problem. 
In particular, the sums of several
dilates have been intensively studied.
The sums of the form
$A+\mathcal{L} A$ for linear maps $\mathcal{L}$
(when $R$ is a module over another ring)
were also considered in \cite{Mudgal2019}. 

In \cite{Konyagin2006} it is proved that
$$
|A+\lambda A|\ge C\frac{|A|\log |A|}{\log \log |A|}
$$
for an absolute constant $C$ and
any finite $A\subset \mathbb{R}$, $|A|>2$,
and any transcendental $\lambda\in \mathbb{R}$
(it is easy to see that for all transcendental $\lambda$ the minimal value of $|A+\lambda A|$
for fixed $|A|$ is the same).
This bound was improved in \cite{Sanders2008}
to $|A|\log^{4/3-o(1)} |A|$. By proving stronger versions of Freiman's theorem, it was further improved to $(\log{|A|})^{c\log\log{|A|}}|A|$ in \cite{schoen2011} and finally to $e^{\log^c{|A|}}|A|$ for some $c>0$ in \cite{sanders2012}. In the other direction, there exist arbitrary large sets $A$ such that $|A+\lambda A|\le e^{C\log^{1/2}{|A|}}|A|$ for an absolute constant $C>0$.

In \cite{BUKH2008} it is
proved among other bounds that $|A+3A|\geqslant 4|A|-O(1)$
for $A\subset \mathbb{R}$ and
$|\lambda_1 A+\ldots+\lambda_k A|\geqslant
(|\lambda_1| +\ldots+|\lambda_k|)|A|+o(|A|)$
for coprime integers $\lambda_1,\ldots,\lambda_k$. In
 \cite{Balog2014} it is
proved that $|A+\frac{p}{q}A|\geqslant (p+q)|A|-O(1)$
for a rational number $\frac pq$, where $p,q$
are coprime positive integers. 
In \cite{Chen2018} a bound 
\begin{equation*}\label{eq:Chen}
    |A+\lambda A|\geqslant (1+\lambda-\varepsilon)|A|
\end{equation*}
was proved for any fixed
real $\lambda\geqslant 1$ and $\varepsilon>0$
and large enough $A\subset \mathbb{R}$
(with bounds on $|A|$ depending on $\lambda$
and $\varepsilon$.)

Here we consider a somehow intermediate 
variant of the problem: an algebraic but 
not rational $\lambda$, namely, $\lambda=\sqrt{2}$.

Let $N,M$ be positive integers, consider the set 
$A=\{x+y\sqrt{2}:0\le x<N,0\le y<M\}$,
then $\sqrt{2} A\subset \{x+y\sqrt{2}:0\le x<2M,
0\le y<N\}$, therefore $A+\sqrt{2} A\subset \{x+y\sqrt{2}:0\le x<N+2M,
0\le y<M+N\}$ and 
$$
\frac{|A+\sqrt{2} A|}{|A|}
\le \frac{(N+2M)(M+N)}{MN}=2\frac{M}N+\frac{N}M+3
$$
that can be arbitrarily close to $3+2\sqrt{2}=(1+\sqrt{2})^2$ when
$N/M$ is close to $\sqrt{2}$. 

We prove that this constant is tight:

\begin{theorem}\label{main}
There exist absolute constants $C, \eps > 0$ such that 
\[
|A+\sqrt{2}A|\geq (1+\sqrt{2})^2|A| - C|A|^{1-\eps}.
\]
for every finite set $A\subset \mathbb{R}$.
\end{theorem}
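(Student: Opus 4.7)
The plan is to combine the two ingredients advertised in the abstract: a Freiman-type structural theorem for sets with small sumset, and a discrete analogue of the Prékopa–Leindler inequality. The structure theorem is used to reduce an arbitrary $A\subset\mathbb{R}$ to a set essentially supported on a 2-dimensional lattice compatible with multiplication by $\sqrt{2}$, while Prékopa–Leindler (via Brunn–Minkowski in dimension 2) supplies the sharp constant $(1+\sqrt{2})^2$.

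If $|A+\sqrt{2}A|\geq ((1+\sqrt{2})^2+\delta)|A|$ for some fixed $\delta>0$ we are done, so we may suppose the doubling $K:=|A+\sqrt{2}A|/|A|$ is bounded. Plünnecke–Ruzsa-type estimates then give polynomial bounds on iterated sumsets $|mA+n\sqrt{2}A|$, and by Freiman's theorem (or one of its Freiman--Ruzsa refinements) $A$ lies inside a generalized arithmetic progression $P=a_0+\{\sum_i n_i d_i:0\leq n_i<L_i\}$ of rank $r=O_K(1)$ and size $|P|\leq C_K|A|$.

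The next step exploits that $\lambda=\sqrt{2}$ is algebraic of degree $2$. Since $\sqrt{2}A\subset A+\sqrt{2}A$, the image of $A$ under multiplication by $\sqrt{2}$ is comparable in size to $A$, and pushing this through the GAP forces approximate integer relations $\sqrt{2}d_i\approx \sum_j m_{ij}d_j$. Because the minimal polynomial of $\sqrt{2}$ is $x^2-2$, these relations constrain the $d_i$ to a 2-dimensional $\mathbb{Q}$-subspace; after an affine normalization we may identify $A$ with a subset of $\mathbb{Z}[\sqrt{2}]\cong\mathbb{Z}^2$ via $a+b\sqrt{2}\mapsto (a,b)$, under which multiplication by $\sqrt{2}$ becomes the linear map $\mathcal{T}(x,y)=(2y,x)$ with $|\det\mathcal{T}|=2$. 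Ruling out rank $r\geq 3$ progressions uses that $(1+|\det\mathcal{T}|^{1/r})^r>(1+\sqrt{2})^2$ in that range, so they cannot be responsible for the extremal behaviour.

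The third step is a sharp lower bound in the lattice setting. In $\mathbb{R}^2$, Brunn–Minkowski gives
\[
|K+\mathcal{T}K|^{1/2}\;\geq\; |K|^{1/2}+|\mathcal{T}K|^{1/2}\;=\;(1+\sqrt{2})|K|^{1/2},
\]
so $|K+\mathcal{T}K|\geq(1+\sqrt{2})^2|K|$; this is precisely the continuous inequality announced in the abstract. A discrete version of Prékopa–Leindler transfers this bound to subsets of $\mathbb{Z}^2$ with a polynomial error, producing $|A+\mathcal{T}A|\geq (1+\sqrt{2})^2|A|-O(|A|^{1-\varepsilon})$, and this error survives the earlier affine normalization.

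The hardest part is the quantitative quality of the structural step. Classical Freiman/Freiman--Ruzsa bounds are only super-polynomial in $K$, which would destroy the polynomial error term $|A|^{1-\varepsilon}$. To reach the stated conclusion one needs a structural reduction that not only detects rank $2$ but pins down the common differences as essentially $1$ and $\sqrt{2}$, while losing at most a factor polynomial in $|A|$; this is the main technical bottleneck. The discrete Brunn–Minkowski / Prékopa–Leindler step, by contrast, is comparatively clean once one is working on the $\mathbb{Z}[\sqrt{2}]$-lattice.
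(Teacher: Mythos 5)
Your proposal assembles the right two ingredients (Freiman plus a discrete Pr\'ekopa--Leindler argument) but wires them together incorrectly, and each of the two places where you wave your hands is a genuine gap. First, the passage from an arbitrary $A\subset\mathbb{R}$ to $A\subset\mathbb{Z}[\sqrt{2}]$ does not go through Freiman and ``approximate integer relations $\sqrt{2}d_i\approx\sum_j m_{ij}d_j$'' at all --- as written, that step is unsubstantiated (it is unclear how bounded doubling of $A+\sqrt 2 A$ forces the real differences $d_i$ of a GAP into a $2$-dimensional $\mathbb{Q}$-subspace, and your rank-exclusion heuristic $(1+|\det\mathcal{T}|^{1/r})^r>(1+\sqrt2)^2$ is not a proof). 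The paper's reduction is exact and lossless: view the $\mathbb{Q}[\sqrt2]$-span of $A$ as a vector space over $\mathbb{Q}[\sqrt2]$ and apply a generic $\mathbb{Q}[\sqrt2]$-linear functional $\varphi$ that is injective on $A$; then $B=\varphi(A)\subset\mathbb{Q}[\sqrt2]$ has $|B|=|A|$ and $|B+\sqrt2 B|=|\varphi(A+\sqrt2 A)|\le|A+\sqrt2 A|$. No structure theorem is needed here.

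Second, and more seriously, your claim that the discrete Pr\'ekopa--Leindler step is ``comparatively clean'' and yields $(1+\sqrt2)^2|A|-O(|A|^{1-\eps})$ for an arbitrary $A\subset\mathbb{Z}^2$ is false: the natural induction (splitting on parity of the abscissa and using Cauchy--Davenport fiberwise) produces an error term proportional to the number of occupied columns and rows, $\phi_x(A)+\phi_y(A)$, which can be of order $|A|$. This is precisely where Freiman enters in the paper --- not to find the lattice, but to tame this error: assuming $|A+\mathcal{T}A|\le(1+\sqrt2)^2|A|$ (else done), Pl\"unnecke--Ruzsa gives $B=A+\mathcal{T}A$ doubling at most $(1+\sqrt2)^{12}$, Green--Ruzsa puts $B$ in a proper GAP $P$ of rank $d$ and size $f|B|$ with $d,f$ \emph{absolute} constants, and one then applies a non-singular map $\tau=\alpha\,\mathrm{Id}+\beta\mathcal{T}$ commuting with $\mathcal{T}$ that annihilates the abscissa of the longest GAP direction, forcing $\phi_x(\tau B)\le|P|^{1-1/d}$ and hence $\phi_x(\tau A)+\phi_y(\tau A)=O(|A|^{1-1/d})$. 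This also dissolves the ``main technical bottleneck'' you identify: since $K$ is an absolute constant, the super-polynomial dependence of the Freiman constants on $K$ is irrelevant --- only finiteness of $d(K)$ and $f(K)$ is used, and the final error is $O(|A|^{1-1/d}\log|A|)$.
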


The rest of the paper is organized as follows. In Section \ref{sec:reduction} we reduce Theorem \ref{main} to the case $A\subset \mathbb{Z}[\sqrt{2}]$, in Section \ref{sec:sumsets} we prove an inequality on sum of subsets of an abelian group which we later use. In Section \ref{sec:discrete-prekopa-leindler} we prove Theorem \ref{main} in the case when $A\subset \mathbb{Z}[\sqrt{2}]$ satisfies certain regularity condition, and in Section \ref{sec:freiman} we deduce Theorem \ref{main} for an arbitrary set $A$. Finally, in Section \ref{sec:continuous-setting} we prove a general analogue of the result in the continuous setting and state a conjecture on the value of
$\liminf |A+\alpha A|/|A|$ for an arbitrary
algebraic $\alpha$.

\section{Reduction to $\mathbb{Z}[\sqrt{2}]$}\label{sec:reduction}
We first prove a quite intuitive fact that for upper-bounding $|A+\sqrt{2}A|$ one may assume that all elements of $A$ are in $\mathbb{Z}[\sqrt{2}]$. We prove the following slightly more general fact.
\begin{lemma}\label{lm:reduction}
Suppose that $\alpha\in \mathbb{C}$ and $A$ is a finite set of complex numbers. Then there exists a finite set $B\subset \mathbb{Q}[\alpha]$ such that $|B|=|A|$ and $|B+\alpha \cdot B|\leq |A+\alpha \cdot A|$.
\end{lemma}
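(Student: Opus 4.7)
The plan is to realize $A$ inside a free $F$-module over $F = \mathbb{Q}[\alpha]$ and then project $F$-linearly onto $F$ itself. Concretely, I let $V$ be the $F$-submodule of $\mathbb{C}$ generated by $A$. When $\alpha$ is algebraic, $F$ is a subfield of $\mathbb{C}$ and $V$ is a finite-dimensional $F$-vector space; when $\alpha$ is transcendental, $F \cong \mathbb{Q}[x]$ is a PID and $V$ is a finitely generated $F$-submodule of the torsion-free $F$-module $\mathbb{C}$, hence free by the structure theorem for finitely generated modules over a PID. In either case, $V$ admits an $F$-basis $e_1, \ldots, e_d$, and every $a \in A$ has a unique expansion $a = \sum_{j=1}^d c_j(a)\, e_j$ with $c_j(a) \in F$.

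For each $t = (t_1, \ldots, t_d) \in \mathbb{Q}^d$ I define the $F$-linear map $\phi_t : V \to F$ by $\phi_t(e_j) = t_j$. Since $\alpha \in F$, the module $V$ is closed under multiplication by $\alpha$, so $A + \alpha A \subset V$ and $\phi_t(A + \alpha A) = \phi_t(A) + \alpha\, \phi_t(A)$. Thus setting $B = \phi_t(A) \subset F = \mathbb{Q}[\alpha]$ automatically gives $|B + \alpha B| \leq |A + \alpha A|$, and the only remaining task is to choose $t$ so that $\phi_t$ is injective on $A$.

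For fixed $a \neq a'$ in $A$, the expression $\phi_t(a) - \phi_t(a') = \sum_j t_j\bigl(c_j(a) - c_j(a')\bigr)$ is a $\mathbb{Q}$-linear function of $t$ which is not identically zero, because at least one coordinate $c_j(a) - c_j(a')$ is a nonzero element of $F$. Hence the set of bad $t$'s is a finite union of proper $\mathbb{Q}$-subspaces of $\mathbb{Q}^d$, and since $\mathbb{Q}$ is infinite such a union cannot cover $\mathbb{Q}^d$. I pick any $t$ outside this union and set $B = \phi_t(A)$; then $|B| = |A|$ and the lemma follows.

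The only nontrivial point is the freeness of $V$ over $F$ when $\alpha$ is transcendental, which uses the structure theorem over the PID $\mathbb{Q}[x]$; everything else is a routine $F$-linear projection combined with the standard avoidance argument over an infinite field.
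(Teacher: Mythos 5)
Your proof is correct and follows essentially the same route as the paper: project the $\mathbb{Q}[\alpha]$-span of $A$ onto $\mathbb{Q}[\alpha]$ by a generic $\mathbb{Q}[\alpha]$-linear functional, which commutes with multiplication by $\alpha$ and is injective on $A$. The only difference is that you make rigorous the transcendental case (where $\mathbb{Q}[\alpha]$ is a PID rather than a field) via freeness of finitely generated torsion-free modules, a point the paper's one-line proof glosses over by calling $V$ a ``vector space.''
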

\begin{proof}
Let $V$ be a $\mathbb{Q}[\alpha]$ vector space generated by elements of $A$. 
There exists a linear functional $\varphi\colon
V\to \mathbb{Q}[\alpha]$ which
is injective on $A$ (a generic $\varphi$ works,
for example). Then for
$B=\varphi(A)$ we have $|B|=|A|$ and $$|B+\alpha\cdot B|
=|\varphi(A+\alpha\cdot A)|\leq |A+\alpha\cdot A|.$$
\end{proof}

\section{Sum of subsets of an abelian group}\label{sec:sumsets}

We need the following standard fact of
Plünnecke--Rusza type.

\begin{lemma}\label{lm:doubling}
Let $G$ be an abelian group. If sets $A, B\subset G$ with $|A|=|B|$ are such that $C:=A+B$ satisfies $|C|\leq K|A|$ then $|C+C|\leq K^6|C|$.
\end{lemma}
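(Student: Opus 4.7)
The plan is to reduce the claim to a direct application of standard Plünnecke--Ruzsa sumset estimates. I would begin by noting the set identity $C+C=A+A+B+B$ and the inequality $|C|\geq|A|$, which holds because $C$ contains the translate $a_0+B$ for any $a_0\in A$; it therefore suffices to show $|A+A+B+B|\leq K^6|A|$.

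Since $|A|=|B|$, the hypothesis can also be read as $|A+B|\leq K|B|$, so Plünnecke's inequality applied to the pair $(B,A)$ yields $|nA|\leq K^n|A|$ for every $n\geq 1$; in particular $|A+A|\leq K^2|A|$. Next I would invoke the multi-set form of the Plünnecke--Ruzsa inequality: in any abelian group, whenever $|X+Y_i|\leq M_i|X|$ for $i=1,\ldots,n$, one has
\[
|Y_1+Y_2+\cdots+Y_n|\leq M_1 M_2\cdots M_n\,|X|.
\]
Applying this with $X=A$ and the four sets $Y_1=Y_2=A$, $Y_3=Y_4=B$ (so $M_1=M_2=K^2$, $M_3=M_4=K$) gives
\[
|A+A+B+B|\leq K^2\cdot K^2\cdot K\cdot K\cdot|A|=K^6|A|,
\]
finishing the argument.

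The only subtle point is choosing this form of Plünnecke--Ruzsa: a more naive route, for instance through the doubling constant of $S=A\cup B$ (which one readily bounds by $4K^2$) followed by Plünnecke applied to $S$, would give only $|C+C|\leq O(K^8)|C|$. Once the multi-set version is brought in, the remainder is a mechanical computation.
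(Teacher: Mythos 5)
Your proof is correct, and it reaches the same $K^6$ bound by a slightly different route than the paper. The paper first extracts $|A+A+A|\leq K^3|A|$ and $|B+B+B|\leq K^3|B|$ from the single-summand Pl\"unnecke--Ruzsa inequality and then assembles $|C+C|=|A+A+B+B|\leq K^6|C|$ with two applications of the elementary Ruzsa triangle inequality $|Y+Z|\leq |X+Y|\,|X+Z|/|X|$. You instead derive $|A+A|\leq K^2|A|$ from Pl\"unnecke applied to the pair $(B,A)$ and then invoke the different-summands form of Pl\"unnecke--Ruzsa, $|Y_1+\dots+Y_n|\leq M_1\cdots M_n|X|$ when $|X+Y_i|\leq M_i|X|$, with $X=A$ and $Y_1=Y_2=A$, $Y_3=Y_4=B$; combined with the (correct) observation that $|C|\geq|A|$ this gives the same constant. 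Both arguments are standard and complete; yours leans on the heavier multi-summand version of Pl\"unnecke--Ruzsa in one shot, while the paper's stays with the single-summand version and pays for it with two uses of the cheap triangle inequality. The only cosmetic omission on your side is the trivial case $A=\emptyset$ (your translate argument for $|C|\geq|A|$ presupposes $A\neq\emptyset$), which the paper dispatches in one line.
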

\begin{proof}
The case $A=\emptyset$ is clear, so
we suppose that $|A|=|B|>0$. 
By a variant of Plünnecke--Ruzsa inequality 
\cite[formula (2.4)]{Ruzsa1996}, we have $|A+A+A|\leq K^3 |A|$ and $|B+B+B|\leq K^3|B|$. We then use Ruzsa sum triangle inequality
\cite[formula (4.6)]{Ruzsa1996}: for any non-empty
subsets $X, Y, Z\subset G$ we have 
\[
|Y+Z| \leq \frac{|X+Y|\cdot |X+Z|}{|X|}.
\]
First, taking $X:=A, Y:=B, Z:=A+A$ we obtain 
\[
|A+A+B|\leq  \frac{|A+B|\cdot |A+A+A|}{|A|}\leq \frac{|C|\cdot K^3|A|}{|A|}=K^3|C|.
\]
Then, taking $X:=B, Y:=A+A, Z:=B+B$ we obtain
\[
|A+A+B+B|\leq  \frac{|A+A+B|\cdot |B+B+B|}{|B|}\leq \frac{K^3 |C|\cdot K^3|B|}{|B|}=K^6|C|.
\]
\end{proof}

\section{Discrete Pr\'ekopa--Leindler inequality} \label{sec:discrete-prekopa-leindler}
By Lemma \ref{lm:reduction} we may assume that $A\subset \mathbb{Z}[\sqrt{2}]$. Identifying a number $a+b\sqrt{2}$ with a point $(a,b)\in\mathbb{Z}^2$ we get $|A+\sqrt{2}A|=|A+\T A|$, where an operator $\T:\mathbb{Z}^2\rightarrow\mathbb{Z}^2$ is given by $\T (a, b) := (2b, a)$.

Note that for a compact set $\Omega\subset \mathbb{R}^2$ the inequality $|\Omega+\T \Omega|
\geqslant (1+\sqrt{2})^2|\Omega|$
(here $|\cdot|$ stands for Lebesgue measure)
follows from Brunn--Minkowski inequality. The idea
is to mimic a proof of Brunn--Minkowski. 
Among the various proofs we have chosen 
the one which uses Pr\'ekopa--Leindler inequality. It is well possible that others work, too. The discrete version of Pr\'ekopa--Leindler inequality
which we use is similar to that of 
IMO 2003 Shortlist Problem A6 \cite[section 3.44.2, problem 6]{dj}(proposed
by Reid Barton). 


%
\begin{definition}
For a set $A\subset \mathbb{Z}^2$ we define $\phi_x(A)$ (resp. $\phi_y(A)$) to be
the number of different $x$ (resp. y) coordinates of points in $A$.
\end{definition}
\begin{lemma}\label{lm:main-lemma}
Let $A\subset\mathbb{Z}^2$ be a finite set. Then one has
\begin{equation}\label{eq:lower-bound}
|A+\T A| \geq (1+\sqrt{2})^2|A|-60|A|^{1/2}-6\log{|A|}(\phi_x(A) + \phi_y(A)). 
\end{equation}
\end{lemma}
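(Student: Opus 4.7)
The plan is to imitate the Pr\'ekopa--Leindler proof of two-dimensional Brunn--Minkowski in the discrete setting, using only the trivial $1$-D sumset bound $|X+Y|\geq|X|+|Y|-1$ and a careful combinatorial assignment argument.

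Introduce row and column slices of $A$: for $b\in\pi_y(A)$ let $R_b(A):=\{x:(x,b)\in A\}$ and $r(b):=|R_b(A)|$, and for $a\in\pi_x(A)$ let $C_a(A):=\{y:(a,y)\in A\}$ and $c(a):=|C_a(A)|$, so that $\sum_b r(b)=\sum_a c(a)=|A|$, $\phi_y(A)=|\pi_y(A)|$ and $\phi_x(A)=|\pi_x(A)|$. The key geometric observation is that for every $(a,b)\in\pi_x(A)\times\pi_y(A)$, the column of $A+\T A$ at $x$-coordinate $a+2b$ contains the one-dimensional sumset $C_a(A)+R_b(A)\subset\mathbb Z$, whose size is at least $c(a)+r(b)-1$.

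The heart of the argument is a \emph{balanced assignment}: for each $c\in\pi_x(A)+2\pi_y(A)$ pick a representative pair $(a(c),b(c))\in\pi_x(A)\times\pi_y(A)$ with $a(c)+2b(c)=c$, arranged so that every $a\in\pi_x(A)$ is chosen for roughly $|\pi_x(A)+2\pi_y(A)|/\phi_x(A)$ values of $c$ and every $b\in\pi_y(A)$ for roughly $|\pi_x(A)+2\pi_y(A)|/\phi_y(A)$ values of $c$. Summing the column bound over $c$ then yields
\[
|A+\T A|\;\geq\;|\pi_x(A)+2\pi_y(A)|\left(\frac{|A|}{\phi_x(A)}+\frac{|A|}{\phi_y(A)}-1\right)-E,
\]
where $E$ absorbs the defects of the assignment. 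Combining this with the $1$-D estimate $|\pi_x(A)+2\pi_y(A)|\geq \phi_x(A)+2\phi_y(A)-2$ (valid when $\pi_x(A)$ meets both residues modulo $2$) and the AM--GM inequality
\[
\frac{\phi_x(A)+2\phi_y(A)}{\phi_x(A)}+\frac{\phi_x(A)+2\phi_y(A)}{\phi_y(A)}=3+\frac{\phi_x(A)}{\phi_y(A)}+\frac{2\phi_y(A)}{\phi_x(A)}\geq 3+2\sqrt 2=(1+\sqrt 2)^2,
\]
the main term $(1+\sqrt 2)^2|A|$ emerges; the remaining corrections, of size $\phi_x(A)+\phi_y(A)+|A|/\phi_x(A)+|A|/\phi_y(A)$, are $O(\phi_x(A)+\phi_y(A))$ by the Loomis--Whitney-type bound $\phi_x(A)\phi_y(A)\geq|A|$.

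The main difficulty is to make the balanced assignment rigorous when the functions $r,c$ and the multiplicity $n(c):=\#\{(a,b)\in\pi_x(A)\times\pi_y(A):a+2b=c\}$ fluctuate across their domains. A natural remedy is a dyadic decomposition: partition $\pi_y(A)$ into $O(\log|A|)$ buckets on which $r(b)$ is essentially constant, and likewise $\pi_x(A)$ by the sizes $c(a)$; within each pair of buckets the balancing can be produced by a greedy or Hall-type construction, and summing over the $O(\log|A|)$ scales accounts for the $6\log|A|\,(\phi_x(A)+\phi_y(A))$ term in the statement. The residue-class obstruction when $\pi_x(A)$ lies entirely in a single class modulo $2$ is handled by running the symmetric argument on \emph{rows} of $A+\T A$, where the sumset $R_b(A)+2C_a(A)$ carries the factor of $2$ on the other coordinate; any residual defect, together with the $-1$ slack per column, is absorbed into the $60|A|^{1/2}$ term via Cauchy--Schwarz and $\phi_x(A)\phi_y(A)\geq|A|$.
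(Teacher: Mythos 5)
Your proposal does capture the correct first step (the column of $A+\T A$ at abscissa $a+2b$ contains $C_a(A)+R_b(A)$, so $z_{a+2b}\ge c(a)+r(b)-1$), but the engine you build on top of it --- the ``balanced assignment'' --- does not exist, and the quantity it is designed to bound is genuinely too small. Take $A=\{(0,0)\}\cup\bigl(\{M,\dots,M+n-2\}\times\{0,\dots,n-1\}\bigr)$ with $M$ huge, so $|A|=n^2-n+1$, $\phi_x=\phi_y=n$, and $N:=|\pi_x(A)+2\pi_y(A)|=4n-3$. The $n-1$ columns $c=2,4,\dots,2n-2$ of the sumset each admit the \emph{unique} representative $(0,c/2)$, so any assignment must select $a=0$ at least $n-1$ times instead of the prescribed $N/\phi_x<4$; since $c(0)=1$ while $c(a)\le n$ elsewhere, every assignment satisfies $\sum_c c(a(c))\le (n-1)+ (3n-2)n\approx 3n^2$, whereas your argument needs $N|A|/\phi_x\approx 4n^2$. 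The shortfall is $\Theta(|A|)$ and cannot be absorbed into error terms of size $O(|A|^{1/2}+\log|A|\,(\phi_x+\phi_y))=O(n\log n)$. Your dyadic bucketing only addresses fluctuation of the fiber sizes $r,c$; it does nothing about columns with forced or constrained representatives, which is the real obstruction. The paper's proof replaces the fixed assignment by the genuine Pr\'ekopa--Leindler mechanism: for every threshold $t$ it forms the super-level sets $\{i: x_i\ge \tfrac{x^0t}{x^0+y}\}$ and $\{j: y_j\ge \tfrac{yt}{x^0+y}\}$ (normalized by the \emph{maxima} $x^0,y$ of the fiber counts, not by averages, which is what keeps both sets simultaneously non-empty), notes that their sumset sits inside $\{k: z_k\ge t-1\}$, applies Cauchy--Davenport to the level sets, and integrates over $t$; a column may thus be certified by different representatives at different thresholds. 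Even then a single pass does not yield $(1+\sqrt2)^2$: the paper must split $A=A_0\sqcup A_1$ by parity of abscissa, prove the estimate only for $A_0+\T A$ against $|A_0|\le|A|/2$, and recover $A_1+\T A_1$ by induction on $|A|$ and on the diameter. None of this structure is present in your sketch, and the constants $60$ and $6\log|A|$ in the statement come precisely from closing that induction.

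The fallback you propose for the residue-class obstruction is also incorrect. Since $\T(x,b)=(2b,x)$, the factor of $2$ lands only on the first coordinate: the second coordinates of $A+\T A$ form $\pi_y(A)+\pi_x(A)$, with no dilation anywhere. So the ``symmetric argument on rows'' can only produce a main term of the shape $(\phi_x+\phi_y)\bigl(\tfrac{|A|}{\phi_x}+\tfrac{|A|}{\phi_y}\bigr)\ge 4|A|$, which falls short of $(1+\sqrt2)^2|A|\approx 5.83\,|A|$. The paper handles the degenerate case (all abscissae of $A$ of one parity) by passing to $\T^{-1}A$, which preserves $|A+\T A|$ and strictly decreases the diameter --- this is exactly why the induction is on $|A|$ and the diameter jointly, not on $|A|$ alone.
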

\begin{proof}
If $|A|=1$ this is trivial so we assume $|A|>1$. We induct on $|A|$ and for fixed $|A|$ we induct on the diameter of $A$. For a finite set $X\subset\mathbb{Z}^2$ let
\[
f(X):=(1+\sqrt{2})^2|X|-60|X|^{1/2}-6\log{|X|}(\phi_x(X) + \phi_y(X)).
\] 
We define a set $A_0$ (resp. $A_1$) to be a set of points of $A$ with even (resp. odd) abscissa. If one of the sets is empty (after a shift we may assume that $A_1$ is empty) then we can consider a set $A':=\T^{-1}A$ instead of $A$ which satisfies $f(A')=f(A)$ and has smaller diameter. So we may assume that both $A_0$ and $A_1$ are non-empty. By shifting $A$ if necessary, we may also assume that 
\[
|A_1|\geq |A|/2, \qquad |A_0|\leq |A|/2.
\]
Note that sets $A_0+\T A$ and $A_1+\T A$ are disjoint. So if $|A_0|\leq \frac{|A|}{(1+\sqrt{2})^2}$ we trivially have, using induction hypothesis applied to $A_1$, that 
\[
|A+\T A|\geq |A_1 + \T A_1| + |A_0+\T A| \geq f(A_1) + |A| \geq f(A).
\]
So from now on we assume that
\begin{equation}\label{eq:assumption-A0-A1}
\frac{|A|}{(1+\sqrt{2})^2} \leq |A_0|\leq \frac{|A|}{2},
\qquad
\frac{|A|}{2} \leq |A_1|\leq |A|-\frac{|A|}{(1+\sqrt{2})^2}. 
\end{equation}


We define three sets of variables indexed by integers:
\begin{align*}
x_i:=|A\cap \{(a, b)\in\mathbb{Z}^2:\, a=i\}|, \qquad 
&y_j:=|A\cap \{(c, d)\in\mathbb{Z}^2:\, d=j\}|, \\
z_k:=|(A+\T A)\cap \{(a, b)\in\mathbb{Z}^2:\, &a=k\}|.
\end{align*}
Recall that $A_0:=\{(a, b)\in A:\, 2|a\}$. We now want to estimate the size of $A_0+\T A$. We define  
\[
y=\max_i y_i, \qquad x^0=\max_{2 | i} x_i.
\]
If $x^0\geq (1+\sqrt{2})|A|^{1/2}$ then the set $A$ has at least this many points on the same vertical line, hence, $\T A$ has also as many points on one horizontal line which implies a lower bound $|A+\T A|\geq (1+\sqrt{2})^2|A|$ and \eqref{eq:lower-bound} trivially follows. Similarly, if $y\geq (1+\sqrt{2})|A|^{1/2}$, the set $\T A$ has at least this many point on a vertical line, hence, $A$ has at least this many points on a horizontal line and \eqref{eq:lower-bound} again trivially follows. So from now on we assume that $x^0, y\leq (1+\sqrt{2})|A|^{1/2}$.

Note that by Cauchy--Davenport theorem (which is trivial for the case of $\mathbb{Z}$) if $x_i, y_j > 0$ then $z_{i+2j}\geq x_i + y_j - 1$. This implies that for any $t$ one has 
\begin{align*}
&\{k:\, 2|k, z_k\geq t - 1\} \supset \{i:\, 2|i, x_i\geq \frac{x^0t}{x^0+y} \}+2\cdot \{j:\, y_j\geq \frac{yt}{x^0+y}\}.
\end{align*}

The lower bounds on the right are chosen such that either both sets we add are empty or both are not. Hence, we can apply Cauchy-Davenport again to obtain 
\begin{align*}
&|\{k:\, 2|k, z_k\geq t - 1\}| \geq |\{i:\, 2|i, x_i\geq \frac{x^0t}{x^0+y} \}|+\{j:\, y_j\geq \frac{yt}{x^0+y}\}|-1,\\
&|\{k:\, 2\not|k, z_k\geq t - 1\}| \geq  |\{i:\, 2\not|i, x_i\geq \frac{x^1t}{x^1+y} \}|+| \{j:\, y_j\geq \frac{yt}{x^1+y}\}|-1.
\end{align*}

If we then integrate the first inequality between $1$ and $x^0+y$ we get
\begin{align*}
|A_0&+\T A| = \sum_{2|k} z_k \geq \int_1^{x^0+y} |\{k:\, 2|k, z_k\geq t - 1\}|\, dt  
\\&\geq \left(\int_1^{x^0+y} |\{i:\, 2|i, x_i\geq \frac{x^0t}{x^0+y} \}|\, dt \right) + \left(\int_1^{x^0+y} |\{j:\, y_j\geq \frac{yt}{x^0+y} \}|\, dt \right) - (x^0+y-1) 
\\&=
\left(-|\{i:\, 2|i, x_i\geq 1\}|+\frac{x^0+y}{x^0}\cdot \sum_{2|i} x_i \right) + \left(-|\{j: y_j\geq 1\}|+\frac{x^0+y}{y}\cdot \sum_j y_j\right) - (x^0+y-1).
\end{align*} 
Using the bounds on $x^0, y$ that we assume we obtain
\[
|A_0+\T A|\geq \frac{x^0+y}{x^0}\cdot |A_0| + \frac{x^0+y}{y}\cdot |A| - \phi_x(A_0) - \phi_y(A) - (2+2\sqrt{2})|A|^{1/2}.
\]
Using that $|A_0|\leq |A|/2$, we obtain
\begin{align*}
|A_0+\T A|
& \geq
\left(\frac{x^0+y}{x^0} + \frac{2(x^0+y)}{y}\right)\cdot |A_0| - \phi_x(A_0) - \phi_y(A) - (2+2\sqrt{2})|A|^{1/2} 
\\&\geq 
(1+\sqrt{2})^2|A_0| - \phi_x(A)-\phi_y(A) - (2+2\sqrt{2})|A|^{1/2}.
\end{align*}
Also, by induction hypothesis, we have 
\[
|A_1+\T A|\geq |A_1+\T A_1|\geq (1+\sqrt{2})^2|A_1| - 60|A_1|^{1/2}-6\log{|A_1|}(\phi_x(A_1)+\phi_y(A_1)).
\]
It remains to add two thing together and note that due to \eqref{eq:assumption-A0-A1} we have 
\[
60|A_1|^{1/2}+(2+2\sqrt{2})|A|^{1/2}\leq \left(60\cdot \left(1 - \frac{1}{(1+\sqrt{2})^2} \right)^{1/2} +(2+\sqrt{2})\right)|A|^{1/2}\leq 60|A|^{1/2},
\]
and also
\begin{align*}
6\log{|A_1|} + 1 \leq 6\log{|A|} + 1 - 6\log{\left(\frac{1}{1-\frac{1}{(1+\sqrt{2})^2}}\right)} \leq 6\log{|A|}. 
\end{align*}
\end{proof}
\section{Freiman's theorem}\label{sec:freiman}
In order to deduce a lower bound on $|A+\sqrt{2} A|$ for an arbitrary finite set $A\subset \mathbb{R}$ from Lemma \ref{lm:main-lemma}, we use the following structural theorem due to Green and Ruzsa \cite{Green2007} (the version for $\mathbb{Z}$ is due to Freiman \cite{frejman2008foundations}). To state the result we first recall the definition of a \emph{proper arithmetic progression}.

\begin{definition}
A set $A\subset \mathbb{Z}^2$ is a proper arithmetic progression of dimension $d\geq 1$ if it has the form
\begin{equation}\label{eq:AP}
P=\left\{v_0 +\ell_1v_1+\dots+\ell_d v_d \suchthat 0\leq \ell_j < L_j \right\},
\end{equation}
where $v_0, v_1,\dots, v_d \in \mathbb{Z}^2, L_1, L_2, \dots, L_d\in \mathbb{Z}_+$ and all sums in \eqref{eq:AP} are distinct (in which case $|P|=L_1L_2\dots L_d$).
\end{definition}
\begin{lemma}[Theorem 1.1, \cite{Green2007}]\label{lm:Freiman}
For every $K>0$ there exist constants $d=d(K)$ and $f=f(K)$ such that for any subset $A\subset \mathbb{Z}^2$ with doubling constant at most $K$ (i.e. such that $|A+A|\leq K|A|$) there exists a proper arithmetic progression $P\subset \mathbb{Z}^2$ containing $A$ which has dimension at most $d(K)$ and size at most $f(K)|A|$.
\end{lemma}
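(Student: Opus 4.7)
The plan is to follow the standard architecture of Freiman's theorem, specialized to the torsion-free rank-two lattice $\mathbb{Z}^2$. As a preparatory step, I would invoke the Plünnecke--Ruzsa machinery (the same tool underlying Lemma \ref{lm:doubling}) to conclude that all iterated sumsets $kA - \ell A$ have size at most $K^{O(k+\ell)}|A|$. This reduces all subsequent analysis to a few small iterates of $A$.

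Next, Ruzsa's modeling lemma produces a Freiman $s$-isomorphism (for some small absolute constant $s$) between $A$ and a subset $A' \subset G$ of a finite abelian group $G$ of order $O_K(|A|)$. Inside this finite ambient group Fourier-analytic methods become available, and I would apply the Bogolyubov--Chang lemma: the sumset $2A' - 2A'$ contains a Bohr set $B(\Gamma, \rho)$ whose frequency set $\Gamma$ has size bounded in terms of $K$ only, and whose radius $\rho$ is bounded below by a constant depending only on $K$.

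The next step is the geometry of numbers: applying Minkowski's second theorem to a suitable convex body associated with $B(\Gamma, \rho)$, one extracts a proper generalized arithmetic progression $P_0$ of dimension at most $|\Gamma|$ and size $\gtrsim_K |G| \gtrsim_K |A|$ sitting inside the Bohr set. Transferring $P_0$ back through the Freiman $s$-isomorphism (with $s$ chosen large enough to respect the sumset identities used to define $P_0$), one obtains a proper progression contained in $2A - 2A \subset \mathbb{Z}^2$ with the same dimension and size guarantees.

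The argument closes with Ruzsa's covering lemma: since $|P_0| \gtrsim_K |A|$ and $A + P_0 \subset 3A - 2A$ has size at most $K^{O(1)}|A|$, the set $A$ is covered by $O_K(1)$ translates of $P_0 - P_0$, and absorbing the translation vectors as additional generators packages everything into a single proper progression of dimension $d(K)$ and size at most $f(K)|A|$ containing $A$. I expect the main technical obstacle to lie in the geometry-of-numbers step: converting the purely Fourier-analytic Bohr-set information into a genuinely \emph{proper} combinatorial progression requires a careful choice of basis witnessing the successive minima of the underlying lattice, and this is really the heart of Freiman's theorem.
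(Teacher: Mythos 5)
The first thing to say is that the paper contains no proof of this statement at all: it is quoted verbatim as Theorem~1.1 of Green and Ruzsa \cite{Green2007} (with the $\mathbb{Z}$ case attributed to Freiman) and is used strictly as a black box in Section~\ref{sec:freiman}. So there is no in-paper argument to measure your proposal against; what you have written is an outline of the proof of the cited external theorem. As such an outline, it follows the standard Ruzsa--Chang architecture (Pl\"unnecke--Ruzsa control of iterated sumsets, Ruzsa modelling in a finite group, Bogolyubov--Chang to find a Bohr set inside $2A'-2A'$, Minkowski's second theorem to extract a large proper progression from the Bohr set, and the Ruzsa covering lemma to finish), and this is indeed essentially how the result is proved; for the torsion-free group $\mathbb{Z}^2$ the machinery simplifies, since one can even reduce to the classical $\mathbb{Z}$ case by composing with an injective-on-$A$ linear functional.

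Two points in your sketch need repair. First, a minor one: the modelling lemma does not give a Freiman $s$-isomorphism from all of $A$, only from a subset of $A$ of density at least $1/s$; this is harmless but must be carried through the covering step. Second, and more substantively, your closing move --- covering $A$ by $O_K(1)$ translates of $P_0-P_0$ and ``absorbing the translation vectors as additional generators'' --- produces a generalized arithmetic progression of dimension $d+O_K(1)$ containing $A$, but adjoining generators in this way destroys properness, and the statement explicitly demands a \emph{proper} progression. You have located the properness difficulty at the geometry-of-numbers stage, but the genuine obstacle is at the end: one needs the separate embedding theorem (due to Chang, and in the general abelian setting to Green--Ruzsa) that every progression of dimension $d$ in a torsion-free group is contained in a proper progression of dimension at most $d$ and cardinality at most $d^{O(d^3)}$ times larger. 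Without that ingredient your argument proves the lemma only with the word ``proper'' deleted, which would not suffice for the application in the paper, since the proof of Theorem~\ref{main} uses $|P|=L_1\cdots L_d$ and the pigeonhole bound $\prod_{j<d}L_j\le|P|^{1-1/d}$, both of which rely on properness. None of this affects the paper itself, which relies on the published theorem rather than on a self-contained proof.
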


\begin{proof}[Proof of Theorem \ref{main}]
By Lemma \ref{lm:reduction} we may assume that $A\subset \mathbb{Z}[\sqrt{2}]$. So the problem is reduced to showing a lower bound on $|A+\T A|$ for an arbitrary finite set $A\subset \mathbb{Z}^2$. We fix an arbitrary set $A\subset \mathbb{Z}^2$ and let $B:=A+\T A$. The idea is to find a non-singular linear transformation $\tau$ commuting with $\T$ such that 
$\tau(\mathbb{Z}^2)\subset \mathbb{Z}^2$ and
for the set $A':=\tau A$
both $\phi_x(A')$ and $\phi_y(A')$ are small
(specifically, small means $O(|A|^\varkappa)$ for certain
$\varkappa<1$).
It allows to apply Lemma \ref{lm:main-lemma} to the set $A'$. The fact that $\tau$ and $\T$ commute ensures that 
\[
B':=A'+\T A' = \tau A + \T \tau A = \tau (A+ \T A) = \tau B.
\]
By Cauchy--Davenport theorem we have 
\begin{equation}\label{eq:two-to-one}
\phi_x(B') = \phi_x(A'+\T A')\geq \phi_x(A')+\phi_y(A')-1,
\end{equation}
so it suffices to choose $\tau$ such that $\phi_x(B') = \phi_x(\tau B)$ is small. We may clearly assume that $|B| = |A+\T A|\leq (1+\sqrt{2})^2|A|$ as otherwise the statement is trivial. Then, by Lemma \ref{lm:doubling}, the set $B=A+\T A$ has doubling constant at most $(1+\sqrt{2})^{12}$ and so by Lemma \ref{lm:Freiman} there exist absolute constants $d, f > 0$ and a proper arithmetic progression 
\begin{equation*}
P=\left\{v_0 +\ell_1v_1+\dots+\ell_d v_d \suchthat 0\leq \ell_j < L_j \right\} \subset \mathbb{Z}^2,
\end{equation*}
such that $B\subset P$ and $L_1L_2\dots L_d = |P|\leq f|B|$. Without loss of generality we may assume that $L_d\geq L_{d-1} \geq\dots \geq L_1$.
Note that $v_d$ has rational coordinates,
thus it
is not an eigenvector of $\T$. Therefore
$v_d$ and $\T v_d$ are linearly independent and 
there exist integers $\alpha,\beta$ such that 
the vector
$\alpha v_d+\beta \T v_d$ is non-zero but has zero abscissa.
Denote 
$\tau:=\alpha \operatorname{Id} + \beta \T$ (it obviously commutes with $\T$ and is not
singular since the eigenvalues of $\T$ are not
rational). 
Then $\tau v_d$ has zero abscissa, and we ensure that 
\[
\phi_x(\tau B) \leq \prod_{j=1}^{d-1} L_j \leq \left(\prod_{j=1}^d L_j\right)^{1-1/d} = |P|^{1-1/d}\leq (f |B|)^{1-1/d}.
\]
Using \eqref{eq:two-to-one} and our assumption that $|B|\leq (1+\sqrt{2})^2 |A|$ we deduce that 
\[
\phi_x(A')+\phi_y(A') \leq 1+(f |B|)^{1-1/d} \leq f_0 |A|^{1-1/d},
\]
where $f_0$ is an absolute constant. It then remains to apply Lemma \ref{lm:main-lemma}  to the set $A'$ (note that $|A'+\T A'| = |A+\T A|$) to see that 
\[
|A+\T A| \geq  (1+\sqrt{2})^2|A| - 60|A|^{1/2} - 6f_0 |A|^{1-1/d}\cdot \log{|A|}  \geq (1+\sqrt{2})^2|A| - C\cdot |A|^{1-\eps},
\]
with some $\eps < 1/d$ and absolute constant $C$ large enough. 
\end{proof}

\section{$A+\T A$ in continuous setting}\label{sec:continuous-setting}

Let $\mu$ denote the 
Lebesgue measure in $\mathbb{R}^d$, and 
let the lower $*$ denote the inner measure
(so, $\mu_*$ is the inner Lebesgue measure
in $\mathbb{R}^d$). 
For
a linear operator $\T\in \End(\mathbb{R}^d)$
denote
$$
H(\T)=\prod_{i=1}^d(1+|\lambda_i|)
$$
where $\lambda_1,\ldots,\lambda_d$
are the (complex) eigenvalues of $\T$,
listed with algebraic multiplicities.

\begin{theorem}\label{th:continuous}
Let $\T\in \End(\mathbb{R}^d)$
be a linear operator.
Then for any 
set $K\subset \mathbb{R}^d$ we have
\begin{equation}\label{continuous}
    \mu_\star (K+\T K)\ge H(\T) \cdot \mu_\star (K).
\end{equation}
\end{theorem}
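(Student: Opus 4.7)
The plan is to put $\T$ into real Jordan form and induct on the dimension $d$, peeling off one $1\times 1$ real-eigenvalue block or one $2\times 2$ complex-conjugate-pair block at each step via a slicing/layer-cake argument reminiscent of Pr\'ekopa--Leindler. Under any invertible linear change of basis $\tau$, the ratio $\mu_\star(K + \T K)/\mu_\star(K)$ is preserved (both quantities scale by $|\det \tau|$) and $H(\T)$ is invariant under conjugation, so we may assume $\T$ is in real block upper-triangular form with diagonal blocks that are either scalars $\lambda \in \mathbb{R}$ or $2\times 2$ blocks of the form $|\lambda|R_\theta$ (corresponding to complex-conjugate eigenpairs $\lambda, \bar\lambda$); by inner regularity we may also take $K$ compact. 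The two base cases $d = 1$ and $d = 2$ with a rotation block both follow immediately from Brunn--Minkowski: $\mu_1(K + \lambda K) \geq (1+|\lambda|)\mu_1(K)$ in 1D, and in 2D, since $R_\theta$ is an isometry, $\mu_2(K + |\lambda|R_\theta K)^{1/2} \geq (1+|\lambda|)\mu_2(K)^{1/2}$, whence $\mu_2(K + \T K) \geq (1+|\lambda|)^2\mu_2(K) = H(\T)\mu_2(K)$.

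For the inductive step, peel off the first block: write $\mathbb{R}^d = V_1 \oplus V_2$ with $\dim V_1 = d_1 \in \{1, 2\}$ and $\T = \bigl(\begin{smallmatrix} \T_1 & A \\ 0 & \T_2 \end{smallmatrix}\bigr)$, so that $H(\T) = (1+|\lambda|)^{d_1} H(\T_2)$. For $y \in V_2$, let $K_y := \{x \in V_1 \suchthat (x, y) \in K\}$ and $F(y) := \mu_{d_1}(K_y)$; then $\mu(K) = \int F\, dy$ by Fubini. For any $w = y_1 + \T_2 y_2$, the slice $(K + \T K)_w$ contains $K_{y_1} + \T_1 K_{y_2}$ up to a translation by $Ay_2$, which does not affect $d_1$-dimensional measure. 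Applying the base case to $K_{y_1} + \T_1 K_{y_2}$ and setting $\tilde F := F^{1/d_1}$, we obtain $\mu_{d_1}((K + \T K)_w) \geq H(w)^{d_1}$, where $H(w) := \sup_{y_1 + \T_2 y_2 = w}(\tilde F(y_1) + |\lambda|\tilde F(y_2))$.

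A layer-cake computation then closes the induction. For each $t > 0$, taking $s_1 = s_2 = t/(1+|\lambda|)$ gives the inclusion $\{H \geq t\} \supset A_t + \T_2 A_t$ with $A_t := \{\tilde F \geq t/(1+|\lambda|)\}$, and the inductive hypothesis applied to $\T_2$ on $V_2$ yields $\mu\{H \geq t\} \geq H(\T_2)\mu(A_t)$. Integrating $\int H^{d_1}\, dw = d_1 \int_0^\infty t^{d_1 - 1}\mu\{H \geq t\}\, dt$, substituting $t = (1+|\lambda|)s$, and using $d_1 \int_0^\infty s^{d_1 - 1}\mu\{\tilde F \geq s\}\, ds = \int \tilde F^{d_1} = \int F = \mu(K)$, we conclude $\int H^{d_1}\, dw \geq (1+|\lambda|)^{d_1} H(\T_2)\mu(K) = H(\T)\mu(K)$, and Fubini applied to $\mu(K + \T K) = \int \mu_{d_1}((K + \T K)_w)\, dw$ finishes the induction. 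The main subtlety is the uniform treatment of the two block sizes $d_1 \in \{1, 2\}$ via the substitution $\tilde F = F^{1/d_1}$, which is what aligns the $(\cdots)^{1/2}$ form of 2D Brunn--Minkowski with the $d_1$-th power appearing in the layer-cake step; measurability of level sets of $F$ (automatic, since $F$ is upper semicontinuous for compact $K$) and the inner-measure bookkeeping are routine.
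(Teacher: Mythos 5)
Your proof is correct and follows the same overall strategy as the paper: induct on dimension by slicing along a $1$- or $2$-dimensional piece on which $\T$ acts as a (rotational) homothety, use $1$- or $2$-dimensional Brunn--Minkowski on the slices, and close the induction with a layer-cake computation over level sets. The one genuine difference is the initial reduction. The paper first perturbs $\T$ to a nearby operator with distinct eigenvalues (justified by a limiting argument and continuity of the spectrum), which produces an honest $\T$-invariant direct-sum decomposition $\mathbb{R}^d=E_1\oplus E_2$; you instead conjugate to real Jordan form and work with a block upper-triangular $\bigl(\begin{smallmatrix}\T_1 & A\\ 0 & \T_2\end{smallmatrix}\bigr)$, observing that the off-diagonal block only translates each slice $K_{y_1}+\T_1K_{y_2}$ by $Ay_2$ and hence does not affect its measure. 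This is a nice simplification: it handles non-semisimple $\T$ directly and removes one of the two limit arguments. The bookkeeping inside the induction also differs cosmetically: the paper normalizes by $a=\sup f$ and uses the bound $\nu_1(K_x+\T K_y)\ge H(\T|_{E_1})\min(\nu_1(K_x),\nu_1(K_y))$ on a common superlevel set, whereas you carry the additive form $(\tilde F(y_1)+|\lambda|\tilde F(y_2))^{d_1}$ with $\tilde F=F^{1/d_1}$; since your layer-cake step also takes $s_1=s_2=t/(1+|\lambda|)$, i.e.\ a common superlevel set, the two are equivalent. One small point to patch: your $H(w)$ must be the supremum over decompositions $w=y_1+\T_2y_2$ with \emph{both} slices $K_{y_1},K_{y_2}$ non-empty (otherwise $K_{y_1}+\T_1K_{y_2}=\emptyset$ and the pointwise bound $\mu_{d_1}((K+\T K)_w)\ge H(w)^{d_1}$ can fail, e.g.\ for $K$ contained in a single slice); this restricted supremum still contains $A_t+\T_2A_t$ in its superlevel sets for $t>0$, so the rest of your computation is unaffected.
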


\begin{proof}
In what follows we assume that $K$
is compact. (The general case follows by passing to
a limit. Indeed,
$K$ contains compact subsets $K_1,K_2,\ldots$
such that $\mu_\star(K)=\lim \mu(K_n)$,
and $K+\T K$ contains $K_n+\T K_n$ that
yields $\mu_\star (K+\T K)\ge \mu(K_n+\T K_n)$. 
So, passing to a limit in \eqref{continuous} 
for $K_n$ we get it for $K$.)

Next, we assume that all $\lambda_i$'s
are distinct. Again, the general case follows by a limit procedure. Indeed, there exist
open neighborhoods $U_1,U_2,\ldots$ of $K+\T K$
such that $\mu(K+\T K)=\lim \mu(U_n)$. For each
$n$ there exists an operator $\T_n$
with distinct eigenvalues which is so close to $\T$
that $K+\T_nK\subset U_n$ and therefore 
$\mu(U_n)\ge \mu(K+\T_nK)$. We may also
assume that $\|\T-\T_n\|\to 0$. 
Thus if \eqref{continuous} holds for
$\T_n$, it also holds for $\T$ (we use here the
well-known fact that the spectrum of the limit of operators in $\mathbb{R}^d$
equals to the limit of their spectra.)

If $\lambda_1$ is real, let $E_1$ be an eigenspace
of $\lambda_1$; if $\lambda_1$ is not real,
and $u+iv (u,v\in \mathbb{R}^d)$ is a corresponding complex eigenvector, let $E_1$ be a span of $u,v$.
This allows to write $\mathbb{R}^d=E_1\oplus E_2$,
where $E_1,E_2$ are $\T$-invariant linear subspaces,
and either 

(i) $\dim E_1=1$; or 

(ii) $\dim E_1=2$
and $\T$ acts on $E_1$ as a rotational homothety.

If $E_2=\{0\}$, then \eqref{continuous}
follows directly from the 1- or 2-dimensional Brunn--Minkowski inequality. So, using induction
we may suppose that $\dim E_2>0$,
and the restriction of $\T$ to $E_2$ 
satisfies \eqref{continuous}.

For $z\in E_2$ denote $K_z=\{x_1\in E_1\suchthat
x_1+z\in K\}$, $f(z)=\nu_1(K_z)$, where $\nu_i$
is the Lebesgue measure in $E_i$, $i=1,2$,
and without loss of generality $d\mu(x_1+x_2)=d\nu_1(x_1)d\nu_2(x_2)$ for $x_1\in E_1,x_2\in E_2$.
Next, denote $a=\sup(f)$ and for 
$t\in (0,1)$ denote $X(t)=\{x\in E_2\suchthat f(x)\ge ta\}$.
The sets $X(t)$ are measurable
(since $f$ is measurable by Fubini theorem)
and non-empty and we have
\begin{equation*}
    \mu(K)=\int_{E_2} f(x)d\nu_2(x)=
    \int_0^\infty \nu_2\{x\in E_2\suchthat
    f(x)\ge \tau\}d\tau=a\int_0^1 \nu_2(X(t))dt.
\end{equation*}
Choose $x,y\in X(t)$. Note that
$$
x+\T y+K_x+\T K_y\subset K+\T K=:L
$$
By 1- or 2-dimensional Brunn--Minkowski inequality
for non-empty compact sets $K_x,K_y\subset E_1$
we have 
$$\nu_1(K_x+\T K_y)\ge H\left(\T|_{E_1}\right) \min(\nu_1(K_x),\nu_1(K_y))\ge 
ta\cdot H\left(\T|_{E_1}\right).
$$
Therefore, if we use the notation $L_z := \{x_1\in E_1\suchthat
x_1+z\in L\}$, we have
$$
\nu_2\left(z\in E_2\suchthat \nu_1(L_z)\ge
ta\cdot H\left(\T|_{E_1}\right) \right)
\ge \nu_{2*}\left(X(t)+\T X(t)\right)\ge
\nu_{2*} (X(t))\cdot H\left(\T|_{E_2}\right) 
$$
by the induction hypothesis. Therefore
\begin{align*}
    \mu(L)&=\int_{0}^\infty 
\nu_2\left(z\in E_2\suchthat \nu_1(L_z) \ge
\tau\right)d\tau\\&\ge 
a\cdot H\left(\T|_{E_1}\right) 
\int_{0}^1 \nu_2\left(z\in E_2\suchthat \nu_1(L_z)\ge
ta\cdot H\left(\T|_{E_1}\right) \right)dt\\
&\ge a\cdot H\left(\T|_{E_1}\right)
H\left(\T|_{E_2}\right)\int_0^1 
\nu_{2*} (X(t))dt =H(\T) \mu(K).
\end{align*}
\end{proof}

\begin{remark}
The bound $H(\T)$ in Theorem \ref{th:continuous}
is sharp. If $\T$ is complex diagonalizable, 
we may find a convex compact set $K=K(\T)\subset\mathbb{R}^d$ such that
\eqref{continuous} turns into equality. 
It suffices to decompose $\mathbb{R}^d$
as a direct sum of 1- and 2-dimensional
$\T$-invariant subspaces, onto each of which $\T$
acts as a rotational homothety, and take $K$
to be the direct 
product of balls in these subspaces. 

In the general case we fix a complex
diagonalizable operator $\T_0$ with the same
spectrum as $\T$, then find a convex
compact set $K(\T_0)$, then find a sequence
of operators $\T_n\to \T_0$ which are similar
to $\T$: $\T_n=S_n \T S_n^{-1}$.
The sets $K_n:=S_n^{-1} K(\T_0)$
satisfy \begin{align*}
    \frac{\mu(K_n+\T K_n)}
{\mu(K_n)}&=\frac{\mu(S_nK_n+S_n\T K_n)}
{\mu(S_nK_n)}=\frac{\mu(K(\T_0)+\T_nK(\T_0))}
{\mu(K(\T_0))}\\&\to \frac{\mu(K(\T_0)+T_0K(\T_0))}
{\mu(K(\T_0))}=H(\T_0)=H(\T).
\end{align*}
\end{remark}

Now we formulate a
general conjecture on $|A+\alpha A|$
for algebraic $\alpha$.

For an irreducible polynomial $f(x)\in \mathbb{Z}[x]$ of degree
$d\geqslant 1$
(irreducibility in particular means that
the coefficients of $f$ do not have a common
integer divisor greater than 1) denote
$$H(f)=\prod_{i=1}^d (|a_i|+|b_i|),$$
where $f(x)=\prod_{i=1}^d (a_ix+b_i)$
is a full complex factorization of $f$
(clearly the value $H(f)$ is well-defined).

\begin{proposition}\label{upper}
Let $\alpha$ be an algebraic real number with minimal polynomial $f(x)\in \mathbb{Z}[x]$
Then
\begin{equation}\label{eq:algebraic}
    \lim_{n\rightarrow \infty} \min_{A\subset \mathbb{R}, |A| = n} \frac{|A+\alpha A|}{|A|} \le  H(f).
\end{equation}
\end{proposition}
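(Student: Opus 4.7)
The plan is to transfer the sharpness case of Theorem~\ref{th:continuous} (recorded in the subsequent Remark) to the discrete setting, by evaluating the extremal convex body on a suitable lattice inside $\mathbb{Q}[\alpha]$. Identify $\mathbb{R}[\alpha]$ with $\mathbb{R}^d$ via the basis $1,\alpha,\ldots,\alpha^{d-1}$; then multiplication by $\alpha$ becomes a linear operator $\T\in\End(\mathbb{R}^d)$ whose eigenvalues are exactly the roots $\lambda_i=-b_i/a_i$ of $f$. Since $f$ is irreducible (hence separable in characteristic zero), the $\lambda_i$ are distinct and $\T$ is complex-diagonalizable, so the Remark after Theorem~\ref{th:continuous} furnishes a compact convex set $K\subset\mathbb{R}^d$ with $\mu(K+\T K)=H(\T)\mu(K)$, where $H(\T)=\prod(1+|\lambda_i|)$; and a direct computation using the canonical factorization (in which complex roots are paired into conjugate factors and $\prod|a_i|=|c_d|$) gives $|c_d|H(\T)=H(f)$.

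Now let $\Lambda_0:=\mathbb{Z}+\mathbb{Z}\alpha+\cdots+\mathbb{Z}\alpha^{d-1}$, a full-rank lattice of some covolume $V_0$ in $\mathbb{R}[\alpha]\cong\mathbb{R}^d$, and for each large positive integer $N$ set $A_N:=\Lambda_0\cap NK$. Standard lattice-point counting for compact convex bodies gives $|A_N|=V_0^{-1}N^d\mu(K)+O(N^{d-1})$, and since $A_N+\alpha A_N\subset(\Lambda_0+\alpha\Lambda_0)\cap N(K+\T K)$, counting on the (possibly finer) lattice $\Lambda_0+\alpha\Lambda_0$ yields
\[
|A_N+\alpha A_N|\le [\Lambda_0+\alpha\Lambda_0:\Lambda_0]\cdot V_0^{-1}N^d\mu(K+\T K)+O(N^{d-1}).
\]

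The key algebraic step is the index computation $[\Lambda_0+\alpha\Lambda_0:\Lambda_0]=|c_d|$. Indeed, $\Lambda_0+\alpha\Lambda_0=\Lambda_0+\mathbb{Z}\alpha^d$; the relation $c_d\alpha^d=-(c_0+c_1\alpha+\cdots+c_{d-1}\alpha^{d-1})\in\Lambda_0$ shows $|c_d|\alpha^d\in\Lambda_0$, while primitivity of $f$ (i.e.\ $\gcd(c_0,\ldots,c_d)=1$) rules out any smaller positive multiple of $\alpha^d$ from lying in $\Lambda_0$. Substituting into the previous estimate and using $|c_d|H(\T)=H(f)$ gives $|A_N+\alpha A_N|/|A_N|\to H(f)$ as $N\to\infty$. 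To cover every size $n$, pick $N$ with $|A_N|\le n\le|A_{N+1}|$ and let $A'_n\subset A_{N+1}$ be any subset of size $n$; then $|A'_n+\alpha A'_n|\le|A_{N+1}+\alpha A_{N+1}|$, while $|A_{N+1}|-|A_N|=O(N^{d-1})=o(N^d)$ forces $|A_{N+1}|/n\to 1$, transferring the asymptotic bound.

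The only non-routine step is the index computation, where primitivity of the minimal polynomial enters essentially; the lattice-point asymptotics are classical, using only that $K$ and $K+\T K$ are compact and convex (so that their boundaries have Lebesgue measure zero).
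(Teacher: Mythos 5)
Your argument is correct and follows essentially the same route as the paper: both transfer the sharpness of Theorem~\ref{th:continuous} to the discrete setting by counting lattice points of $\mathbb{Z}+\mathbb{Z}\alpha+\dots+\mathbb{Z}\alpha^{d-1}$ in dilates of the extremal convex body for the multiplication-by-$\alpha$ operator. The only (cosmetic) difference is where the leading coefficient is booked: the paper shrinks $A$ to the index-$|c|$ sublattice $\{c \mid a_{d-1}\}$ so that $A+\alpha A$ stays in $\mathbb{Z}^d$, whereas you keep $A$ on the full lattice and count the sumset on the index-$|c|$ overlattice $\Lambda_0+\alpha\Lambda_0$.
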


\begin{proof}
The upper bound follows from the sharpness
of the continuous bound \eqref{continuous}
for convex compact sets, see Remark after Theorem
\ref{th:continuous}. 
Namely, consider the following operator
$\T$: $g\mapsto x\cdot g$ 
in the $d$-dimensional factor space 
$\mathbb{R}[x]/f(x)\mathbb{R}[x]$. 
Let $\{1,x,\ldots,x^{d-1}\}$ be the standard
basis in this space, and normalize
Lebesgue measure appropriately.

It is not hard to see that $H(f)=|c|\cdot H(\T)$,
where $c$ is the leading coefficient of $f$. 
Choose a convex compact set $K\subset \mathbb{R}^d$ such that
$\mu(K+\T K)/\mu(K)=H(\T)$ 
(the eigenvalues
of $\T$ are $d$ algebraic conjugates of $\alpha$
and they are distinct, since $f$ is irreducible
and therefore $f$ and $f'$ are coprime;
thus $\T$ is diagonalizable, and such 
$K$ exists due to Remark after 
Theorem \ref{th:continuous}.)
Then take large $M>0$ 
and consider the set
$$
\Omega_M=\{a_0+a_1x+\ldots+a_{d-1} x^{d-1}\in M\cdot K\suchthat 
a_i\in \mathbb{Z}, c|a_{d-1}\}.
$$
The number of such points is
$|\Omega_M|=|c|^{-1} M^d\mu(K)+o(M^d)$. 
On the other hand, all points in $\T\Omega_M$
have integer coordinates (that's why we 
required $c|a_{d-1}$). Therefore
$|\Omega_M+\T\Omega_M|\le M^d \mu(K+\T K)+o(M^d)$.
Finally $$\frac{|\Omega_M+T\Omega_M|}{|\Omega_M|}\le
|c| \frac{\mu(K+\T K)}{\mu(K)}+o(1)=
|c|H(\T)=H(f).$$
It remains to take $A=\{g(\alpha)\suchthat
g(x)\in \Omega_M\}$.
\end{proof}

\begin{conjecture}\label{conj:1}
For any real algebraic $\alpha$
the inequality \eqref{eq:algebraic}
turns into equality. For complex algebraic $\alpha$
the analogous equality holds for $A\subset \mathbb{C}$.
\end{conjecture}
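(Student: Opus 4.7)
The plan is to prove the matching lower bound by generalizing the argument of Theorem \ref{main}, with Theorem \ref{th:continuous} as a guide for the correct discrete inequality. By Lemma \ref{lm:reduction} we may assume $A \subset \mathbb{Q}[\alpha]$, and after clearing denominators we embed $A$ in a full-rank sublattice $L$ of $\mathbb{Q}[\alpha]$ on which multiplication by $\alpha$ is represented by an integer linear operator $\T \in \End(\mathbb{Z}^d)$. The factor $H(f) = |c| \cdot H(\T)$ appearing in the conjecture, rather than merely $H(\T)$, arises from the index of the image lattice $\T L$ inside the ambient lattice $L$; this mirrors the divisibility condition $c \mid a_{d-1}$ used in Proposition \ref{upper} and must be tracked throughout the argument.

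Following the Freiman step in Section \ref{sec:freiman}, we may assume $|A + \T A| \le (H(f) + 1)|A|$, apply Lemma \ref{lm:doubling} to bound the doubling of $B := A + \T A$ by $O_\alpha(1)$, and invoke Lemma \ref{lm:Freiman} to place $B$ inside a proper arithmetic progression $P$ of dimension $D = O_\alpha(1)$ and size $O_\alpha(|B|)$. Because $f$ is irreducible of degree $\ge 2$, no rational vector is an eigenvector of $\T$, so for any nonzero $v \in \mathbb{Q}^d$ the vectors $v, \T v, \ldots, \T^{d-1} v$ span $\mathbb{Q}^d$. This lets us build a nonsingular $\tau \in \mathbb{Z}[\T]$ commuting with $\T$ which sends each of the longest generators $v_j$ of $P$ into a prescribed coordinate hyperplane, forcing every coordinate projection of $\tau B$ to have size $O_\alpha(|A|^{1-1/D})$.

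The heart of the argument, and the main obstacle, is a discrete analogue of Theorem \ref{th:continuous}: for every finite $A \subset \mathbb{Z}^d$,
\begin{equation*}
|A + \T A| \ge H(f) \cdot |A| - C_\alpha |A|^{1/2} - C_\alpha \log |A| \sum_{i=1}^d \phi_{x_i}(A),
\end{equation*}
where $\phi_{x_i}(A)$ denotes the size of the projection of $A$ onto the $i$-th coordinate axis. The continuous proof of Theorem \ref{th:continuous} decomposes $\mathbb{R}^d = E_1 \oplus E_2$ into $\T$-invariant subspaces with $\dim E_1 \in \{1, 2\}$ and integrates the Brunn--Minkowski step in $E_1$ against the inductive bound on $E_2$. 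A discrete analogue should induct jointly on $|A|$ and on the diameter of $A$, slice $A$ by cosets of a full-rank sublattice complementary to $E_2$, apply Cauchy--Davenport (or its two-dimensional lattice variant) to each slice to play the role of Brunn--Minkowski in $E_1$, and apply the induction hypothesis on the quotient lattice for the $E_2$-direction, with the layer-cake summation $X(t) = \{z : |\text{slice}_z| \ge ta\}$ of Lemma \ref{lm:main-lemma} replacing the continuous integral. The delicate case is $\dim E_1 = 2$: here $\T$ acts as a rotational homothety and the mod-$2$ parity splitting used for $\sqrt{2}$ has no direct counterpart, so one must instead slice along a rational direction inside $E_1$ and absorb the resulting anisotropic loss into the error.

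Once the inequality above is established, combining it with the Freiman step and the identity $|\tau A + \T \tau A| = |A + \T A|$ yields $|A + \T A| \ge H(f) \cdot |A| - O_\alpha(|A|^{1-\eps})$ for some $\eps > 0$, which matches Proposition \ref{upper} and proves Conjecture \ref{conj:1} for real $\alpha$. The complex case is identical with $\mathbb{R}$ replaced by $\mathbb{C}$ throughout.
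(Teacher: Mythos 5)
The statement you set out to prove is Conjecture \ref{conj:1}; the paper does not prove it and offers no argument for the lower bound beyond the single case $\alpha=\sqrt2$ (Theorem \ref{main}) — what is actually proved in the paper is only the upper bound \eqref{eq:algebraic} (Proposition \ref{upper}). Your proposal correctly identifies what is missing, namely a discrete analogue of Theorem \ref{th:continuous} with an error term controlled by the coordinate projections $\phi_{x_i}(A)$, but it does not supply it: the ``heart of the argument'' is stated as an unproved inequality, followed by a description of what a proof \emph{should} look like. That inequality is exactly the open part of the problem, and the sketch does not survive scrutiny. The continuous proof of Theorem \ref{th:continuous} slices along $\T$-invariant eigendirections, which for irreducible $f$ of degree at least $2$ are irrational or complex, so there is no lattice-compatible analogue of the decomposition $\mathbb{R}^d=E_1\oplus E_2$. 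The proof of Lemma \ref{lm:main-lemma} does \emph{not} imitate the continuous proof in this respect: it slices along the coordinate axes, uses the special shape $\T(a,b)=(2b,a)$ to make $A_0+\T A$ and $A_1+\T A$ disjoint via parity of the abscissa, and recovers the constant $(1+\sqrt2)^2$ from the elementary optimization $\frac{x^0+y}{x^0}+\frac{2(x^0+y)}{y}\ge(1+\sqrt2)^2$ under $|A_0|\le |A|/2$ — a computation tied to this particular $2\times 2$ operator and to the fact that for $f=x^2-2$ the target $H(f)=(1+\sqrt2)^2$ happens to be attained by a coordinate-aligned bound. For general $f$ the constant $\prod_i(|a_i|+|b_i|)$ is a product over irrational and complex linear factors, and no coordinate-aligned Cauchy--Davenport argument is known to produce it; your remark that in the case $\dim E_1=2$ one should ``slice along a rational direction and absorb the anisotropic loss into the error'' is precisely the step nobody knows how to carry out — the loss from slicing off the eigendirection is proportional to the main term, not to the error term.

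There is a second, more mechanical gap in your Freiman step. In Section \ref{sec:freiman} the map $\tau=\alpha\operatorname{Id}+\beta\T$ only has to annihilate \emph{one} coordinate of the \emph{single} long generator $v_d$, and the one bound on $\phi_x(\tau B)$ then controls both $\phi_x(\tau A)$ and $\phi_y(\tau A)$ via \eqref{eq:two-to-one}. In your setting the claimed discrete inequality needs all $d$ projections $\phi_{x_i}(\tau A)$ to be small simultaneously. Making every coordinate projection of $\tau P$ insensitive to the longest generator would force $\tau v_D=0$, contradicting nonsingularity; distributing the conditions over several generators imposes $d$ homogeneous linear conditions on the $d$-dimensional algebra $\mathbb{Q}[\T]$, which generically has only the trivial solution. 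So even granting the core inequality, the reduction to sets with small projections requires a new idea. In short: the proposal is a reasonable research program, not a proof, and the statement remains a conjecture.
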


This conjecture is a partial case of the following

\begin{conjecture}\label{conj:2}
Let  $\mathcal{T}:\mathbb{R}^d\rightarrow\mathbb{R}^d$ be a linear operator with a characteristic polynomial $f(\mathcal{T})$, then 
\[
\lim_{n\rightarrow \infty} \min_{A\subset \mathbb{R}^d, |A| = n} \frac{|A+\mathcal{T} A|}{|A|} = \min_{g| f(\mathcal{T}), g\in \mathbb{Z}[x]} 
H(g),
\]
where the minimum is taken over
all irreducible divisors $g$ of $f(\mathcal{T})$.
If
there are no polynomials $g$ with rational coefficients that divide $f(\mathcal{T})$, we define the minimum to be infinity. 
\end{conjecture}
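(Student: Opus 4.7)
The statement naturally splits into an upper bound and a lower bound, and the upper bound is essentially an extension of Proposition \ref{upper}. For each irreducible factor $g\in \mathbb{Z}[x]$ of the characteristic polynomial $f(\T)$, the kernel $V_g:=\ker g(\T)$ is a non-trivial, rational, $\T$-invariant subspace of $\mathbb{R}^d$. A minimal $\T$-invariant subspace $W\subseteq V_g$ has dimension $\deg g$, and $\T|_W$ is $\mathbb{Q}$-conjugate to multiplication by a root $\alpha$ of $g$ on $\mathbb{Q}(\alpha)$. Transporting the lattice construction from the proof of Proposition \ref{upper} along this isomorphism produces finite sets $A_n\subset W\subset \mathbb{R}^d$ with $|A_n+\T A_n|/|A_n|\to H(g)$. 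Minimizing over $g$ gives the upper bound.

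For the lower bound, the plan is to run the strategy of Theorem \ref{main} at a higher level of generality. First, a multidimensional version of Lemma \ref{lm:reduction}: view $\mathbb{R}^d$ as a $\mathbb{Q}[\T]$-module, let $V$ be the $\mathbb{Q}[\T]$-submodule generated by $A$, and apply a generic $\mathbb{Q}[\T]$-linear functional from $V$ into a $\mathbb{Q}[\T]$-module of minimal possible rank that is injective on $A$, reducing to the case where $A$ is contained in a finitely generated $\mathbb{Q}[\T]$-module, hence in a $\T$-invariant lattice. Next, assuming $|A+\T A|\le K|A|$ with $K$ below the conjectured minimum, Lemma \ref{lm:doubling} yields $|B+B|\le K^6|B|$ for $B:=A+\T A$, so Freiman's theorem places $B$ inside a proper generalized arithmetic progression $P$ of dimension bounded in terms of $K$. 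Exactly as in the proof of Theorem \ref{main}, apply an element of $\mathbb{Q}[\T]$ (which commutes with $\T$) chosen so that the image of the longest generator of $P$ has small projection onto a selected $\T$-invariant direction, making the corresponding projection of $A$ subpolynomially small. Finally, apply a multidimensional discrete Prékopa--Leindler inequality, inducting over the $\T$-invariant decomposition $\mathbb{R}^d=E_1\oplus E_2$ used in Theorem \ref{th:continuous} (with $\dim E_1\in\{1,2\}$), using Cauchy--Davenport along lattice directions inside $E_1$ and summing the resulting bounds over superlevel sets of the fiberwise size function on $E_2$.

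The main obstacle lies in this last step: a robust discrete analogue of Theorem \ref{th:continuous} that inducts over the $\T$-invariant decomposition while producing the sharp constant. The continuous proof is free to split $\mathbb{R}^d$ along arbitrary $\T$-invariant subspaces, but in the discrete world the ambient lattice is rarely compatible with such splittings. The induction must therefore pass to a well-chosen sublattice at each step (mirroring the even/odd abscissa split in Lemma \ref{lm:main-lemma}, which encodes the fact that $[\mathbb{Z}^2:\T\mathbb{Z}^2]=2$), and the losses from this incompatibility must be absorbed into a lower-order error without damaging the leading constant. A second, genuinely new difficulty is the passage from $H(f)$ to $\min_g H(g)$: the argument must recognize when $A$ is essentially concentrated on a proper $\T$-invariant rational subspace corresponding to a divisor $g$ with $H(g)<H(f)$, and extract the sharp constant $H(g)$ in that case rather than the weaker $H(f)$ that a direct imitation of Theorem \ref{th:continuous} would yield.
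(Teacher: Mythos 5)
The first thing to say is that the paper does not prove this statement: it is stated as Conjecture~\ref{conj:2} and left open, so there is no ``paper's own proof'' to match your argument against. What the paper actually establishes is only the upper-bound half in the special case of Proposition~\ref{upper} (i.e.\ Conjecture~\ref{conj:1}, where $f$ is the minimal polynomial of a single algebraic number), the continuous analogue (Theorem~\ref{th:continuous}), and the full two-sided statement for the single operator $\T(a,b)=(2b,a)$ (Theorem~\ref{main}). Your upper-bound half is essentially sound and is the natural extension of Proposition~\ref{upper}: one small correction is that you should take $W$ to be a minimal \emph{rational} $\T$-invariant subspace of $\ker g(\T)$ (a minimal real invariant subspace has dimension $1$ or $2$, e.g.\ the $\sqrt2$-eigenline for $g=x^2-2$, and carries no $\T$-invariant lattice), and one must check that the divisibility trick handling a non-monic leading coefficient of $g$ transports along the isomorphism. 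But the upper bound is the easy half.

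The lower-bound half is where the conjecture actually lives, and your text is a program rather than a proof; the obstacles you flag at the end are not technicalities to be absorbed but are, as far as the paper is concerned, the open problem itself. Concretely: (i) the base case of your induction over $\mathbb{R}^d=E_1\oplus E_2$ already requires a sharp discrete two-dimensional inequality for $K_x+\T K_y$ when $\T|_{E_1}$ is a rotational homothety, which is precisely Conjecture~\ref{conj:1} for a complex quadratic $\alpha$ and is unproved; the proof of Lemma~\ref{lm:main-lemma} leans on the very specific arithmetic of $\T(a,b)=(2b,a)$ (the even/odd split matching $[\mathbb{Z}^2:\T\mathbb{Z}^2]=2$, and one-dimensional Cauchy--Davenport along fibers), and no mechanism is given for the general case. (ii) The Freiman step does not scale: in Theorem~\ref{main} a single $\tau=\alpha\operatorname{Id}+\beta\T$ collapses the longest generator because $v_d$ and $\T v_d$ span $\mathbb{R}^2$; in general $P$ has up to $d(K)$ generators (typically far more than $d$), $\mathbb{Q}[\T]$ has dimension at most $d$, and you need to control \emph{all} the $\phi$-type quantities entering the induction, not one. (iii) Your reduction step is not valid as stated: the $\mathbb{Q}[\T]$-submodule of $\mathbb{R}^d$ generated by $A$ need not be finite-dimensional over $\mathbb{Q}$ (take $\T$ to be multiplication by $\pi$ on $\mathbb{R}$), so one cannot in general land $A$ in a $\T$-invariant lattice; the transcendental and mixed cases must be handled by an entirely different (Konyagin--Sanders type) argument, consistent with the value $+\infty$ in the conjecture. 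Finally, the passage from $H(f)$ to $\min_g H(g)$, which you correctly identify, has no counterpart anywhere in the paper's methods. In short, your proposal is a reasonable research outline aligned with the authors' stated strategy, but it does not constitute a proof, and the statement remains a conjecture.
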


It is interesting to find the analogue of
Theorem \ref{th:continuous} and
Conjectures \ref{conj:1}, \ref{conj:2} for several operators. In this direction we recall a conjecture by Boris Bukh \cite[problem 5]{BBwebpage}: 
\begin{equation}\label{BB}
\sum_{i=1}^k |\mathcal{T}_i A|\ge 
\left(\sum_i |\det \mathcal{T}_i|^{1/d} -o(1)\right)
|A|\end{equation}
for large sets $A\subset \mathbb{Z}^d$, where $\mathcal{T}_i$ are $k$
linear operators preserving $\mathbb{Z}^d$
without common invariant subspace such that
$\sum_i \mathcal{T}_i \mathbb{Z}^d=\mathbb{Z}^d$. The continuous analogue of \eqref{BB} immediately
follows from Brunn--Minkowski inequality,
and is in general not tight even for $k=2$
as follows from Theorem \ref{th:continuous}.

\medskip
We are grateful to B.~Bukh  and to I.~Shkredov and S.~Konyagin for drawing our attention
to \cite{BBwebpage} and \cite{sanders2012,schoen2011} respectively.

\bibliographystyle{siam}
\bibliography{bibliography1}

\end{document}